\crefname{subsection}{subsection}{subsections}
\setlist[enumerate]{label=\rm{(\arabic*)}}
\theoremstyle{plain}
\newtheorem{thm}{Theorem}[section]
\newtheorem{thmIntro}{Theorem}
\newtheorem*{thm*}{Theorem}
\newtheorem{prop}[thm]{Proposition}
\newtheorem*{prop*}{Proposition}
\newtheorem{lemma}[thm]{Lemma}
\newtheorem{pb}{Problem}
\newtheorem*{algo*}{Algorithm}
\theoremstyle{definition}
\newtheorem{ex}[thm]{Example}
\newtheorem{exIntro}[pb]{Example}
\newtheorem{defi}[thm]{Definition}
\newtheorem{rmq}[thm]{Remark}
\numberwithin{equation}{section}
\DeclareMathOperator{\hght}{height}
\DeclareMathOperator{\lgth}{length}
\DeclareMathOperator{\Fitt}{Fitt}
\DeclareMathOperator{\Proj}{Proj}
\DeclareMathOperator{\car}{char}
\newcommand*{\mrm}[1]{\mathrm{#1}}
\newcommand*{\mb}[1]{\mathbb{#1}}
\newcommand*{\mbf}[1]{\mathbf{#1}}
\newcommand*{\mc}[1]{\mathcal{#1}}
\newcommand*{\pn}[1]{\mathbb{P}^{#1}}
\renewcommand*{\k}{\mrm{k}}
\newcommand*{\PI}{\pn{}(I)}
\newcommand*{\Tor}{\mb{T}}
\newcommand*{\IPI}{I_{\PI}}
\newcommand*{\R}{\mc{R}}
\newcommand*{\Sy}{\mc{S}}
\newcommand*{\nd}{\mathfrak{d}}
\renewcommand{\d}[2]{d_{#2}(#1)}
\newcommand*{\A}{\mathcal{A}}
\date{\today}
\title[Polar Cremona maps of arbitrarily large degree]{Plane polar Cremona maps of arbitrarily large degree in positive characteristic}
\author{R\'emi Bignalet-Cazalet}
\address{Universit\`a degli studi di Genova, Dipartimento di matematica,
Via Dodecaneso 35, 16146 Genova (GE), Italy }
\email{bignalet@dima.unige.fr}
\thanks{The author was funded by the European Union's Horizon 2020 research and innovation program as a  Marie Sk\l{}odowska-Curie fellow of the Istituto Nazionale di Alta Matematica "Francesco Severi" grant No. 713485.
}
\keywords{rational maps, homaloidal hypersurfaces, homaloidal curves, naive graph, torsion of the symmetric algebra, Milnor number, Swan conductor}
\subjclass[2010]{
13D02, 
14E05, 
14B05, 
}
\begin{document}
\definecolor{biblio}{rgb}{0,0.65,1}
\definecolor{xdxdff}{rgb}{0.49,0.49,1}
\definecolor{ttttff}{rgb}{0.2,0.2,1}
\definecolor{zzzzff}{rgb}{0.6,0.6,1}
\definecolor{indigo}{rgb}{0.29,0,0.51}
\definecolor{veronese}{rgb}{0.35,0.4,0.13}

\begin{abstract}
A result of I.V.\ Dolgachev states that the complex homaloidal polynomials in three variables, i.e.\ the complex homogeneous polynomials whose polar map is birational, are of degree at most three. In this note we describe homaloidal polynomials in three variables of arbitrarily large degree in positive characteristic. Using combinatorial arguments, we also classify line arrangements whose polar map is homaloidal in positive characteristic.

\end{abstract}
\maketitle
\section*{Introduction}\label{labIntro}
Given a homogeneous polynomial $f\in\k[x_0,\ldots,x_m]$ over a field $\k$, the \emph{polar map} $\Phi_f:\pn{m}_\k\dashrightarrow \pn{m}_\k$ of $f$ is the rational map defined by the linear system $\langle \frac{\partial f}{\partial x_0},\ldots,\frac{\partial f}{\partial x_m}\rangle$. The polynomial $f$ is called \emph{homaloidal} if $\langle \frac{\partial f}{\partial x_0},\ldots,\frac{\partial f}{\partial x_m}\rangle$ has no fixed component and $\Phi_f$ is birational.

It was established by I.V.\ Dolgachev \cite[Theorem 4]{dolgachev2000polar} that if $\k=\mb{C}$, the homaloidal polynomials in three variables are either of degree $2$, defining a smooth conic in the projective plane, or of degree $3$, defining either a union of three lines in general position or a union of a smooth conic with one of its tangents. These polynomials remain homaloidal when the base field $\k$ has characteristic  greater than $2$. This leads to the following question which is a generalisation of \cite[Question 3.7]{dorHassSim2012polar}.
\begin{pb}\label{pbDolgClass}
Over a field $\k$ of positive characteristic, are
there other homaloidal polynomials than the ones in Dolgachev's classification?
\end{pb}
In \cite[Proposition 4.6]{Big2018TorSymAlg}, a first example of a homaloidal polynomial of degree $5$ over a field of characteristic $3$ was produced, answering both \Cref{pbDolgClass} and \cite[Question 3.7]{dorHassSim2012polar}. Very recently, the following example of a homaloidal curve of degree $5$ in characteristic $3$ was also described.

\begin{exIntro}\label{exIntro}
In characteristic $3$, the polynomial $f=x_0(x_1^2+x_0x_2)(2x_1^2+x_0x_2)$, whose zero locus is the union of two conics intersecting with multiplicity two in two distinct points with the tangent at one intersection point, is homaloidal.
\end{exIntro}

The negative answer to \Cref{pbDolgClass} leads to the following question.

\begin{pb}\label{pbCar}
Does there exist homaloidal polynomials in three variables of arbitrary large degree over fields of arbitrarily large characteristic?
\end{pb}

In this note, we answer this question, see \Cref{thmConsGen} for our expanded result.  Following the designation in \cite{Hirzebruch1983ArrLinesAndHyp} we say that a union of $n$ distinct lines through a given point $z_0$ with another line not passing through $z_0$ is a \emph{near-pencil} arrangement of $n+1$ lines. In addition, given a reduced projective curve $F=\mb{V}(f)$ which is the zero locus $\mb{V}(f)$ in $\pn{2}_\k$ of a homogeneous polynomial $f$, we say that $F$ is \emph{homaloidal} if $f$ is homaloidal.
\begin{thmIntro}\label{thmGen}
Let $\k$ be a field of characteristic $p$ and let $n\in\mathbb{N}_{>0}$ be a multiple of $p$. Then the near-pencil arrangement of $n+1$ lines is homaloidal.
\end{thmIntro}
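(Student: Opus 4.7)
The plan is to set up explicit coordinates, use Euler's identity in characteristic $p$ to extract a structural identity among the partial derivatives of $f$, and then exhibit an explicit inverse to $\Phi_f$.

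I would first choose projective coordinates $[x_0:x_1:x_2]$ on $\pn{2}_\k$ so that $z_0 = [0:0:1]$ and the line not passing through $z_0$ is $\{x_2=0\}$. The $n$ lines through $z_0$ are then cut out by distinct linear forms $\ell_1,\ldots,\ell_n$ in $x_0,x_1$; setting $g := \prod_{i=1}^{n}\ell_i \in \k[x_0,x_1]$, the defining polynomial is $f = x_2\,g$ and the polar map is
\[
\Phi_f = \bigl[\,x_2 g_{x_0} : x_2 g_{x_1} : g\,\bigr],
\]
where $g_{x_i} = \partial g/\partial x_i$.

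Since $g$ is homogeneous of degree $n$ and $p\mid n$, Euler's identity gives $x_0 g_{x_0} + x_1 g_{x_1} = n\,g = 0$. The polynomial $g$ is squarefree and non-constant, hence not a $p$-th power, so $(g_{x_0},g_{x_1})\neq (0,0)$; combined with $x_0 g_{x_0} = -x_1 g_{x_1}$ and the coprimality of $x_0,x_1$ in the UFD $\k[x_0,x_1]$, this yields a nonzero $h\in \k[x_0,x_1]$ of degree $n-2$ with $g_{x_0} = x_1 h$ and $g_{x_1} = -x_0 h$. In particular
\[
\Phi_f = \bigl[\,x_1 x_2\,h : -x_0 x_2\,h : g\,\bigr].
\]
To rule out a fixed component I observe that $g\in\k[x_0,x_1]$ has no $x_2$ factor, so $\gcd(x_2 g_{x_0}, x_2 g_{x_1}, g) = \gcd(g_{x_0},g_{x_1},g)$; any linear factor $\alpha x_0 + \beta x_1$ of $g$ divides $g_{x_0}$ only when $\alpha = 0$ and divides $g_{x_1}$ only when $\beta = 0$, so no such factor divides all three components, and the gcd is $1$.

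To prove birationality I would exhibit an inverse. The first two coordinates of $\Phi_f$ satisfy $(x_2 g_{x_0} : x_2 g_{x_1}) = (x_1 : -x_0)$, which already recovers the ratio $(x_0:x_1) = (-y_1:y_0)$ from an image point $(y_0:y_1:y_2)$. Motivated by this I would set
\[
\Psi(y_0,y_1,y_2) := \bigl[\,-y_1 y_2\,h(-y_1,y_0) : y_0 y_2\,h(-y_1,y_0) : g(-y_1,y_0)\,\bigr]
\]
and verify $\Phi_f\circ\Psi = \mathrm{id}$ by direct substitution, using the homogeneity of $g,g_{x_0},g_{x_1}$ to factor out the common scalar $y_2\,h(-y_1,y_0)$.

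The crucial step is the Euler relation: in characteristic zero, or more generally when $p\nmid n$, one only has the coarse bound $\deg\Phi_f = (\deg f-1)^2 - (\text{multiplicity of the base scheme})$, which for large $n$ forces $\deg\Phi_f>1$. It is precisely the identity $g_{x_0}=x_1 h,\ g_{x_1}=-x_0 h$, available in characteristic $p$ once $p\mid n$, that collapses the first two components of $\Phi_f$ into a linear pencil in $(x_0:x_1)$ and makes the map Cremona; I expect the verification in the last paragraph to be routine once this identity is in hand.
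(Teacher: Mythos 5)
Your proof is correct, and it takes a genuinely different route from the paper's. Your key identity---that $p\mid n$ turns the Euler relation for $g$ into $x_0g_{x_0}+x_1g_{x_1}=0$, whence $g_{x_0}=x_1h$, $g_{x_1}=-x_0h$ and $\Phi_f=(x_1x_2h:-x_0x_2h:g)$---is sound, your gcd argument correctly rules out fixed components, and the ``routine'' verification does go through: writing $s=y_2h(-y_1,y_0)$ one has $(\psi_0,\psi_1)=s\cdot(-y_1,y_0)$, so by homogeneity every coordinate of $\Phi_f(\Psi(y))$ acquires the common nonzero factor $y_2^{n-1}h(-y_1,y_0)^{n}g(-y_1,y_0)$, leaving $(y_0:y_1:y_2)$; and the one-sided identity $\Phi_f\circ\Psi=\mathrm{id}$ already forces the pullback $\Phi_f^*$ on function fields to be an isomorphism, hence birationality. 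The paper proceeds quite differently: \Cref{presQ} computes a Hilbert--Burch presentation matrix of the jacobian ideal, whose first column $(x_0,x_1,-nx_2)$ encodes exactly your Euler relation for $g$ (so the characteristic-$p$ degeneration exploited is the same one, seen as this column losing its $x_2$-entry when $p\mid n$), and \Cref{tteCar} then splits the naive graph $\Proj\big(\Sy(I)\big)$ into the graph of $\Phi_f$ plus a torsion component supported over the pencil point, computing by B\'ezout that the torsion has multidegree $(n-2,0,0)$ and the graph $(1,n,1)$. Your route is shorter, elementary, and produces the inverse map explicitly; the paper's route yields the full multidegree rather than only the topological degree, and its mod-$p$ syzygy framework also covers curves where your Euler trick does not apply directly---the curves $G_n$ of \Cref{thmConsGen} and the quintic of \Cref{exChar3}---and feeds, via \Cref{propNaiveDegree} and \Cref{thmConsGen}, into the classification of homaloidal line arrangements (\Cref{classifLineArr}). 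One minor point: your step ``$g$ is squarefree, hence not a $p$-th power, so $(g_{x_0},g_{x_1})\neq(0,0)$'' uses that $\k$ is perfect; this is harmless here since the paper assumes $\k$ algebraically closed throughout.
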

This result provides an answer to \Cref{pbCar}. For instance, let $n\in\mb{N}_{>0}$ be such that $n\equiv 0\mod 5$ and let $\k$ be a field of characteristic $5$. Then the polynomial $f_n$ in \Cref{thmGen} has degree $n+1$ and is homaloidal. Moreover, given a prime number $p$, a field $\k$ of characteristic $p$ and a positive integer $m$, \Cref{thmGen} gives a homaloidal polynomial of degree $mp+1$, so homaloidal polynomials exist in arbitrarily large degree and in any prime characteristic.

Remark that the \emph{base ideal} of the polar map $\Phi_f$, i.e.\ the ideal generated by the partial derivatives of a polynomial $f$ defines the singular locus of the curve $\mb{V}(f)$ defined by $f$ in $\pn{2}_\k$. In this direction, the proof given by I.V.Dolgachev about the classification of homaloidal complex polynomials relies on the \emph{Jung-Milnor's formula} over $\mb{C}$ relating several invariants of singularities \cite[Lemma 3]{dolgachev2000polar}. In contrast, our proof of \Cref{thmGen} relies on the study of the torsion of the symmetric algebra of the base ideal of $\Phi_{f}$, an approach that fits in line with previous works such as \cite{RussoSimis2001OnBirMap}, \cite{dorHassSim2012polar}, and \cite{Big2018TorSymAlg}. We emphasize that most of the polynomials we consider in this note define free curves (a curve being \emph{free} if, by definition, the base ideal of the polar map is determinantal \cite[Def 2.1]{dimca2015freenessversus}). We specially focus on this case  since, when the base ideal has a linear syzygy, free curves are the curves whose singular schemes have maximal length \cite[Cor 1.2]{dimca2015freenessversus}.

\subsection*{Contents of the paper}
In the first section, we recall the relations between the symmetric algebra of the base ideal of a rational map $\Phi$ and the graph of $\Phi$. Birationality of a
map can be checked via its graph which explains the strategy of detecting a
birational map via the symmetric algebra of its base ideal.

The second section constitutes the heart of our work. As a central idea, one can study the reduction modulo $p$ of the presentation matrix of the base ideal in order to predict a drop of the \emph{topological degree} of the polar map, see \Cref{subSecRedModp}. The next step is then to evaluate this drop. We carry on this evaluation by describing the generic fibre of the \emph{naive graph} and thus describing the generic fibre of the graph itself. This implies in particular that the polynomials $f_n$ are homaloidal (\Cref{descGenTor}). We end this section by providing another example of a polynomial of degree $5$ which is homaloidal in characteristic $3$. Its zero locus in $\pn{2}_k$ is  the union of the unicuspidal rampho\"id quartic and the tangent cone at its cusp (\Cref{exChar3}).

In the third section, we focus on line arrangements (that is plane curves which are union of lines) and we show that, over an algebraically closed field of characteristic $p>0$, the only homaloidal line arrangements are the ones defining unions of three general lines or near-pencil of $lp+1$ lines for any $l\geqslant 1$, see \Cref{classifLineArr}. This classification follows from the description of the singularities defined by line arrangements.

The explicit computations given in this paper were made using basic functions of the software systems \textsc{Polymake} and  \textsc{Macaulay2} with the \textsc{Cremona} package  \cite{stagliano2017Mac2Pack} associated. The corresponding codes are available on request.

\subsection*{Acknowledgements} I warmly thank Adrien Dubouloz and Daniele Faenzi for useful remarks and suggestions about early versions of this note. I also thank deeply the anonymous referee for having raised the question of the classification of homaloidal line arrangements and for having pointed out a mistake about the computation of Milnor number of singularities in a previous version of this paper.

\section{Graph and naive graph}\label{sectionSymRees}

In this note, all the fields are assumed to be algebraically closed and denoted by the same letter $\k$.

\subsection{Multidegree of a subscheme of $\pn{2}\times\pn{2}$, projective degrees}
For this presentation, we follow \cite[7.1]{Dolg2011ClassAlgGeo}. Given $l\in\lbrace 0,1,2\rbrace$, we denote by $H^l$ a general codimension $l$ linear subspace of $\pn{2}_\k$. For a subscheme $X\subset\pn{2}_\k\times\pn{2}_\k$ of codimension $2$, the \emph{multidegree} $\big(\d{X}{0},\d{X}{1},\d{X}{2}\big)$ is defined by : \begin{equation}\label{multInter}
\d{X}{i}=\lgth\big(X\cap p^{-1}_1(H^{i})\cap p^{-1}_2(H^{2-i})\big)
\end{equation} where $p_1:\pn{2}_\k\times\pn{2}_\k\rightarrow\pn{2}_\k$ and $p_2:\pn{2}_\k\times\pn{2}_\k\rightarrow\pn{2}_\k$ are the first and second projection respectively.

Consider now a rational map $\Phi=(\phi_0:\phi_1:\phi_2):\pn{2}_\k\dashrightarrow \pn{2}_\k$ with base ideal $I=(\phi_0,\phi_1,\phi_2)\subset R=\k[x_0,x_1,x_2]$ where $\phi_0,\phi_1,\phi_2$ are homogeneous polynomials of the same degree that do not share any common factor. The \emph{graph} $\Gamma$ of $\Phi$ is by definition the closure of $\lbrace(x,\Phi(x)),\; x\in\pn{2}_\k\backslash \mb{V}(I)\rbrace\subset\pn{2}_\k\times\pn{2}_\k$ in the Zariski topology. It is an irreducible variety of codimension $2$. We define the \emph{projective degrees} $\d{\Phi}{0},\d{\Phi}{1},\d{\Phi}{2}\big)$ of $\Phi$ as the terms in the multidegree $\big(\d{\Gamma}{0},\d{\Gamma}{1},\d{\Gamma}{2}\big)$ of $\Gamma$.

Since $\Phi$ is birational if and only if $\d{\Phi}{0}=1$, this last quantity has a special importance and is called the \emph{topological degree} of $\Phi$.

\subsection{Decomposition of the naive graph} Let $\Phi=(\phi_0:\phi_1:\phi_2):\pn{2}_\k\dashrightarrow\pn{2}_\k$ be a rational map with base ideal $I=(\phi_0,\phi_1,\phi_2)$ in the coordinate ring $R=\k[x_0,x_1,x_2]$ of $\pn{2}_\k$ and let $\R(I):=\oplus_{i\geq 0}I^it^i\subset R[t]$ be the \emph{Rees algebra} of $I$. By \cite[7.1.3]{dolgachev2000polar} the blow-up $\Proj\big(\R(I)\big)$ of $\pn{2}_\k$  with respect to $I$ is the graph $\Gamma$ of $\Phi$. Moreover, $\R(I)$ is the epimorphic image of the symmetric algebra $\Sy(I)$ of $I$ via the epimorphisms $I^{\otimes i}\twoheadrightarrow I^i$. Hence, the ideal of the graph $\Gamma\subset \pn{2}_\k\times\pn{2}_\k$ of $\Phi$ contains the ideal of $\PI=\Proj\big(\Sy(I)\big)\subset \pn{2}_\k\times\pn{2}_\k$ generated by the entries of the matrix $\begin{pmatrix}
y_0 & y_1 & y_2
\end{pmatrix}M_S$ where $S=R[y_0,y_1,y_2]$ stands for the coordinate ring of $\pn{2}_\k\times\pn{2}_\k$ and $M_S$ stands for a presentation matrix of $I\subset R$ tensored with $S$  \cite[Subsection 1.1]{Big2018TorSymAlg} (note that we use the same notation $M$ and $M_S$ from now on). The ideal $I$ is said to be of \emph{linear type} if $\Gamma=\PI$ \cite[1.1 Ideals of linear type]{Vasconcelos2005Int}. 

\begin{defi}
The \emph{naive graph} of $\Phi$ is the projectivization $\PI=\Proj\big(\Sy(I)\big)$ of the symmetric algebra of $I$.
\end{defi}

\begin{ex}\label{exFond} Let $\Phi=(\phi_0:\phi_1:\phi_2):\pn{2}_\k\dashrightarrow \pn{2}_\k$ be a dominant rational map with base ideal $I=(\phi_0,\phi_1,\phi_2)\subset\k[x_0,x_1,x_2]$ of height at least $2$ and where $\phi_0,\phi_1,\phi_2$ have the same degree $d$. Note that, in this setting, the condition $\hght(I)\geq 2$ is equivalent to the fact that $(\phi_0,\phi_1,\phi_2)$ do not share any common factor, a property which we always assume in the following. Assume moreover that $\Phi$ is \emph{determinantal}, i.e.\ that the polynomials $\phi_0,\phi_1,\phi_2$ are the $2$-minors of a given $3\times 2$-matrix such that all its entries in the first column are homogeneous of degree $a$ and all its entries in the second column are homogeneous of degree $b$ (hence $a+b=d)$. The Hilbert-Burch theorem \cite[Theorem 20.15]{eisenbud1995algebra} implies that $I$ has a free resolution of the form:
\begin{equation*}
\begin{tikzcd}[row sep=0.8em,column sep=2em,minimum width=2em]
0\ar{r}&R^2\ar[r, "{M}"]&  R^3  \ar[rr,"{(\phi_0 \;\phi_1\;\phi_2)}"]&& I \ar{r}& 0.
\end{tikzcd}
\end{equation*} where $M$ is a $3\times 2$-matrix with entries in $R$. Hence $M$ is a presentation matrix of $I$ and $\PI$ is the intersection of two divisors of $\pn{2}_\k\times\pn{2}_\k$ of bidegree $(a,1)$ and $(b,1)$ respectively. Considering the case where $I$ is of linear type, since $\Gamma$ has codimension $2$, $\PI=\Gamma$ is a complete intersection and the projective degrees of $\Phi$ are given by B\'ezout's theorem:
\begin{equation}\label{MultiDegGraphNaif} \begin{cases}
&\d{\Phi}{0}=\d{\PI}{0}=ab\\
&\d{\Phi}{1}=\d{\PI}{1}=a+b\\
&\d{\Phi}{2}=\d{\PI}{2}=1.
\end{cases}
\end{equation}
\end{ex}
Assume that $\Phi:\pn{2}_\k\dashrightarrow\pn{2}_\k$ is a determinantal rational map with base ideal $I\subset R=\k[x_0,x_1,x_2]$ of height $2$ and denote by $M$ the $3\times 2$-presentation matrix of $I$. Then the complement subscheme $\Tor=\overline{\PI\backslash \Gamma}$ of $\Gamma$ in $\PI$ is supported on \begin{equation}\label{topTors}\underset{\mbf{x}\in\mb{V}\big(\Fitt_2(I)\big)}{\bigcup}\lbrace \mbf{x}\rbrace \times \pn{2}_\k\end{equation} where $\Fitt_2(I)$ is the second Fitting ideal of $I$ \cite[Corollary-Definition 20.4]{eisenbud1995algebra}, by definition generated by the entries of $M$, see \cite[Corollary 1.4]{Big2018TorSymAlg} for a reference.
\begin{defi}
Let $\Phi=(\phi_0:\phi_1:\phi_2):\pn{2}_\k\dashrightarrow \pn{2}_\k$ be a determinantal rational map with base ideal $I=(\phi_0,\phi_1,\phi_2)$ of height $2$. The \emph{naive projective degrees} $\nd_0(\Phi),\nd_1(\Phi),\nd_2(\Phi)$ of $\Phi$ are defined by the multidegree $\big(\d{\PI}{0},\d{\PI}{1},\d{\PI}{2}\big)$ of its naive graph $\PI$. 
\end{defi} 

\begin{ex}\label{exDegTop} If $I$ is not necessarily of linear type in \Cref{exFond}, we still have that the naive projective degree of $\Phi$ are $(ab,a+b,1)$ because $\PI$ is still a complete intersection. However, if there is an extra part $\mb{T}=\overline{\PI\backslash\Gamma}$ in $\PI$ with support as in \eqref{topTors}, we have then \[ \big(\d{\PI}{0},\d{\PI}{1},\d{\PI}{2}\big)=\big(\d{\Gamma}{0},\d{\Gamma}{1},\d{\Gamma}{2}\big)+\big(\d{\Tor}{0},\d{\Tor}{1},\d{\Tor}{2}\big).\] So the topological degree $\d{\Phi}{0}$ is strictly smaller than $\nd_0(\Phi)$ because $\d{\Tor}{0}\geq \lgth\big(\mb{V}\big(\Fitt_2(I)\big)$ is non zero. The quantity $\d{\Gamma}{0}=\d{\PI}{0}-\d{\Tor}{0}$ depends moreover on the scheme structure of $\Tor$ and is the object of \Cref{subSecWeigth}.
\end{ex}

\section{Contribution of the torsion}
In this section, following the situation described in \Cref{exDegTop}, we illustrate first on an example how to estimate the drop of the topological degree in positive characteristic compared to characteristic $0$. We analyse then in greater generality how this modification impacts the computation of the topological degree of $\Phi$.

\subsection{Reduction of the presentation matrix modulo p}\label{subSecRedModp} In what follows, for a homogeneous ideal $I$ of $R=\k[x_0,x_1,x_2]$ and an integer $t$, we denote by $I_t$ the homogeneous piece of $I$ of degree $t$. Let $n\in\mb{N}_{>1}$ and let $F_n$ be the union of $n$ distinct lines through a point $z_0\in\pn{2}_\k$ with any other line not passing through $z_0$. We can reduce to the situation where $\mb{V}(x_2)$ is the latter line  and two lines among the $n$\textsuperscript{th} firsts are $\mb{V}(x_0)$ and $\mb{V}(x_1)$ so that $z_0=(0:0:1)$. We can consequently assume without loss of generality that an equation of $F_n$ reads $f_n=x_0x_1l_2\cdots l_{n-1}x_2$ where, for all $i\in\lbrace 2,\ldots,n-1\rbrace$, $l_i$ belongs to $(x_0,x_1)_1$ (set $f_n=x_0x_1x_2$ if $n=2$). The ideal $I$ of partial derivatives of $f_n$ is then equals to \begin{align*}
I=(x_1l_2\cdots l_{n-1}x_2+&x_0x_1\frac{\partial}{\partial x_0}(l_2\cdots l_{n-1})x_2,\\ &x_0l_2\cdots l_{n-1}x_2+x_0x_1\frac{\partial}{\partial x_1}(l_2\cdots l_{n-1})x_2,x_0x_1l_2\cdots l_{n-1}).
\end{align*} 
\begin{lemma}\label{presQ} A minimal presentation matrix of $I$ reads \[M=\begin{pmatrix}
x_0 & 0 \\ x_1 & x_1l_2\cdots l_{n-1} \\-nx_2 & -l_2\cdots l_{n-1}x_2-x_1\frac{\partial}{\partial x_1}(l_2\cdots l_{n-1})x_2\end{pmatrix}
\]
\end{lemma}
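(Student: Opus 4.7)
Set $g:=l_2\cdots l_{n-1}$, a homogeneous polynomial of degree $n-2$ in the variables $x_0, x_1$ only, so that $f_n = x_0 x_1 g x_2$. Writing $g_i := \partial g/\partial x_i$ for $i\in\{0,1\}$, the three generators of $I$ become
\[
\phi_0 = x_1 g x_2 + x_0 x_1 g_0 x_2,\quad \phi_1 = x_0 g x_2 + x_0 x_1 g_1 x_2,\quad \phi_2 = x_0 x_1 g.
\]
My plan is to: (i) check that the two columns of $M$ are syzygies on $(\phi_0, \phi_1, \phi_2)$; (ii) check that the signed $2\times 2$-minors of $M$ recover $(\phi_0,\phi_1,\phi_2)$; (iii) invoke the Hilbert--Burch theorem and verify minimality.

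For (i), the key input is Euler's relation applied to $g$, namely $x_0 g_0 + x_1 g_1 = (n-2) g$. A direct expansion gives $x_0 \phi_0 + x_1 \phi_1 - n x_2 \phi_2 = 2 x_0 x_1 g x_2 + x_0 x_1 x_2 (x_0 g_0 + x_1 g_1) - n x_0 x_1 g x_2$, which collapses to $0$ via Euler; this handles the first column. The second column is a term-by-term cancellation that does not even require Euler (it amounts to $x_1 g \cdot \phi_1 = (g x_2 + x_1 g_1 x_2)\cdot \phi_2$).

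For (ii), the minors obtained by deleting the bottom or middle row of $M$ produce $\phi_2$ and $\phi_1$ respectively on the nose, by a single line of arithmetic. The remaining minor, obtained by deleting the top row, simplifies to $(n-1) x_1 g x_2 - x_1^2 g_1 x_2$; substituting $x_1 g_1 = (n-2)g - x_0 g_0$ from Euler converts this into $\phi_0$. Thus the three signed maximal minors of $M$ equal $\phi_0, \phi_1, \phi_2$.

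For (iii), since $f_n$ is reduced its singular locus $\mb{V}(I)$ is finite, so $I$ has height $2$, the maximal possible codimension for the ideal of maximal minors of a $3\times 2$-matrix. The Hilbert--Burch theorem \cite[Theorem 20.15]{eisenbud1995algebra} then forces the columns of $M$ to generate the syzygies of $(\phi_0,\phi_1,\phi_2)$, so $M$ is a presentation matrix. Minimality is immediate: every entry of $M$ lies in the irrelevant maximal ideal $(x_0, x_1, x_2)$ of $R$, so no Gaussian elimination can shrink the presentation further. The only delicate point is the sign bookkeeping in step (ii); Euler's relation carries through uniformly over any base field $\k$, and the entry $-n x_2$ simply vanishes in characteristic $p$ when $p \mid n$, which foreshadows the drop of topological degree analyzed in the next subsection.
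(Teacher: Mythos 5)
Your steps (i) and (ii) are correct and essentially reproduce the paper's computation: the signed maximal minors of $M$ equal $\phi_0,\phi_1,\phi_2$ via Euler's relation $x_0g_0+x_1g_1=(n-2)g$ (which indeed holds formally over any field), and your minimality argument (no unit entries) is the paper's. Step (i) is in fact redundant, since Laplace expansion makes the columns of $M$ automatically syzygies of the signed minors. The genuine gap is in step (iii), where you write ``since $f_n$ is reduced its singular locus $\mb{V}(I)$ is finite, so $I$ has height $2$.'' This conflates two things that differ exactly in the setting the paper cares about. First, $\mb{V}(I)$ is not the singular locus of the curve: the singular locus is $\mb{V}(f_n)\cap\mb{V}(I)$, and the two coincide only when Euler's relation $x_0\phi_0+x_1\phi_1+x_2\phi_2=(n+1)f_n$ puts $f_n$ inside $I$, i.e.\ when $p\nmid n+1$. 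Second, and more seriously, the inference ``reduced $\Rightarrow$ the ideal of partials has height $2$'' is false in positive characteristic: in characteristic $3$ the cuspidal cubic $\mb{V}(x_0^2x_1+x_2^3)$ is reduced and irreducible, its singular locus is the single point $(0:1:0)$, yet its ideal of partials is $(2x_0x_1,x_0^2)$, whose zero locus is the whole line $\mb{V}(x_0)$. Since \Cref{presQ} is stated for every $n>1$ in every characteristic --- in particular it is invoked in \Cref{propNaiveDegree} when $p\nmid n$, which includes cases with $p\mid n+1$ such as $n=2$, $p=3$ --- your justification does not cover all the cases where the lemma is used, and when $p\mid n+1$ it cannot be repaired by Euler's relation, which then degenerates into a syzygy rather than yielding $f_n\in I$.

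The correct argument, which is the one the paper gives, uses the specific shape of the generators rather than a general fact about reduced curves: if $I$ had height $1$, the three partials would share a common factor; such a factor would divide $\phi_2=x_0x_1l_2\cdots l_{n-1}$, hence, up to scalar, would be divisible by one of the lines $x_0,x_1,l_2,\ldots,l_{n-1}$; but none of these divides both $\phi_0$ and $\phi_1$, precisely because the lines of the arrangement are pairwise distinct (for instance $x_0\mid\phi_0=x_1x_2(g+x_0g_0)$ would force $x_0\mid g=l_2\cdots l_{n-1}$). Once height $2$ is secured this way, the remainder of your proof (Hilbert--Burch plus minimality) goes through verbatim and coincides with the paper's.
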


\begin{proof}
The ideal $I$ has depth $2$ for otherwise the first two generators of $I$ would be divisible by either $x_0$, $x_1$ or $l_i$ for $i\in\lbrace 2,\ldots,n-1\rbrace$ which is excluded by the assumption that all the lines in $F_n$ are distinct. A direct computation shows that \[\begin{cases} x_1x_2l_2\ldots l_{n-1}+x_0x_1x_2\frac{\partial}{\partial x_0}(l_2\ldots l_{n-1})=M_1\\
x_0x_2l_2\ldots l_{n-1}+x_0x_1x_2\frac{\partial}{\partial x_1}(l_2\ldots l_{n-1})=M_2\\
x_0x_1l_2\ldots l_{n-1}=M_3\end{cases}\] where, given $j\in\lbrace 1,2,3\rbrace$, $M_j$ stands for $( -1 )^j$ times the minor obtained from $M$ by leaving out the $j$\textsuperscript{th} row (in order to check these equalities, remark that, for any $i\in \lbrace 2,\ldots,n-1\rbrace$, $l_i=x_0\frac{\partial l_i}{x_0}+x_1\frac{\partial l_i}{x_1}$). Hence $I$ is a determinantal ideal given by the $2$-minors of $M$. Since it has the expected depth, the Hilbert-Burch theorem asserts that a free resolution of $I$ reads:
\begin{equation*}
\begin{tikzcd}[row sep=3em,column sep=0.5cm,minimum width=2em]
  0 \ar{r}& R^2\ar[r,"{M}"]& R^3 \ar{r}&  I \ar{r}&0.
\end{tikzcd}
\end{equation*}
Moreover, since it does not have constant entries, $M$ is a minimal presentation matrix of $I$.
\end{proof}

\begin{prop}\label{propNaiveDegree}
Let $n\in\mb{N}_{>1}$ and $\k$ be an algebraically closed field such that $p=\car(\k)$ does not divide $n$. Then $I$ is of linear type and the polar map $\Phi_{f_n}$ of $f_n$ has multidegree $(n-1,n,1)$.
\end{prop}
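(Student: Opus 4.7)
The plan is to combine \Cref{presQ} with the criterion for linear type via the second Fitting ideal. From \Cref{presQ}, the first column of the presentation matrix $M$ has linear entries $(x_0, x_1, -n x_2)$ and the second column has entries of degree $n-1$. In particular we are in the setting of \Cref{exFond} with $(a,b) = (1, n-1)$, so irrespective of whether $I$ is of linear type, the naive graph $\PI \subset \pn{2}_\k \times \pn{2}_\k$ is the complete intersection of two divisors of bidegree $(1,1)$ and $(n-1,1)$, and B\'ezout gives naive projective degrees $(\nd_0, \nd_1, \nd_2) = (n-1, n, 1)$.

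The core point is then to show that, under the hypothesis $p \nmid n$, the torsion subscheme $\Tor = \overline{\PI \setminus \Gamma}$ is empty. By the description recalled in \eqref{topTors}, $\Tor$ is supported on $\mb{V}\big(\Fitt_2(I)\big) \times \pn{2}_\k$, where $\Fitt_2(I)$ is generated by the six entries of $M$. Since the first column of $M$ contributes $x_0, x_1, -n x_2$ to $\Fitt_2(I)$, and since $-n$ is a unit in $\k$ precisely when $p \nmid n$, the ideal $\Fitt_2(I)$ contains the irrelevant ideal $(x_0, x_1, x_2)$. Hence $\mb{V}\big(\Fitt_2(I)\big) = \emptyset$ in $\pn{2}_\k$ and therefore $\Tor = \emptyset$.

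Consequently $\Gamma = \PI$, i.e.\ $I$ is of linear type, and the projective degrees of $\Phi_{f_n}$ equal the naive projective degrees of $\PI$, namely $(n-1, n, 1)$. The topological degree $\d{\Phi_{f_n}}{0} = n-1$ is in particular $\geq 1$ so $\Phi_{f_n}$ is indeed dominant, which is implicit in the statement.

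The only genuinely delicate step is the emptiness of $\mb{V}(\Fitt_2(I))$; all the rest is a direct reading of \Cref{presQ} and \Cref{exFond}. The role of the hypothesis $p \nmid n$ is precisely that the linear combination $-n x_2$ coming from differentiating the factor $x_2$ in $f_n$ does not vanish modulo $p$, so that the three linear entries of the first column still cut out the empty set in $\pn{2}_\k$. When $p \mid n$, this entry drops to $0$ and $\Fitt_2(I)$ fails to contain $x_2$, which is exactly what opens the door to a non-trivial torsion component $\Tor$ and to the drop of the topological degree analysed in the next subsection.
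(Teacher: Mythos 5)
Your proposal is correct and follows essentially the same route as the paper: read the bidegrees $(1,1)$ and $(n-1,1)$ off the presentation matrix from \Cref{presQ} to get naive multidegree $(n-1,n,1)$ via \Cref{exFond}, then observe that when $p\nmid n$ the entries of $M$ generate $\Fitt_2(I)=(x_0,x_1,x_2)$, whose vanishing locus in $\pn{2}_\k$ is empty, so by \eqref{topTors} there is no torsion, $\Gamma=\PI$, and the projective degrees coincide with the naive ones. Your added remarks (dominance, and the role of the entry $-nx_2$ when $p\mid n$) are consistent with the paper's subsequent analysis; note also that you correctly state the second column has entries of degree $n-1$, where the paper's proof contains a harmless typo saying degree $n$.
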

\begin{proof}
Since $M$ has one column of linear entries and one column of entries of degree $n$, the naive multidegree is $(n-1,n,1)$, see \Cref{exFond}. Moreover $\Fitt_2(I)=(x_0,x_1,x_2)$ is not supported on any point of $\pn{2}_\k$ so, by \eqref{topTors}, $I$ is of linear type. Hence the graph $\Gamma$ and the naive graph $\PI$ coincide, so the projective degrees and the naive projective degrees of $\Phi_{f_n}$ coincide. 
\end{proof}
We consider now the case where $\car(\k)$ divides $n$ in a more general situation.

\subsection{Weight of the torsion}\label{subSecWeigth}
In all the section, we put $n\in\mb{N}_{>1}$, $\k$ be any field unless otherwise specified, $R=\k[x_0,x_1,x_2]$ and we consider the ideal $I$ generated by the $2$-minors of the matrix \[M=\begin{pmatrix}
\lambda_0 & p_0 \\ \lambda_1 & p_1 \\ \lambda_2 & p_2
\end{pmatrix}
\] with entries in $R=\k[x_0,x_1,x_2]$ such that for all $j\in\lbrace 0,1,2\rbrace$, $\lambda_j$ belongs to $(x_0,x_1)_1$ and $p_j$ belongs to $(x_0,x_1)^{n-2}_{n-1}$, the homogeneous piece of degree $n-1$ of the ideal $(x_0,x_1)^{n-2}$. We assume moreover that $I$ has height $2$ and that there exists $j\in\lbrace 0,1,2\rbrace$ such that $p_j\in (x_0,x_1)^{n-2}_{n-1}\backslash (x_0,x_1)^{n-1}_{n-1}.$

In $S=R[y_0,y_1,y_2]$, consider now the ideal \[ \IPI=(\lambda_0y_0+\lambda_1y_1+\lambda_2y_2,y_0p_0+y_1p_1+y_2p_2)\] of the embedding of $\PI$ in $\pn{2}_{\mbf{x}}\times\pn{2}_{\mbf{y}}$ generated by the entries of the matrix $\begin{pmatrix}
y_0&y_1&y_2
\end{pmatrix}M$. Following the computation in \Cref{exFond}, $\PI$ being a complete intersection of two divisors of bidegree $(1,1)$ and $(n-1,1)$, one has \[\big(\d{\PI}{0},\d{\PI}{1},(\d{\PI}{2}\big)=(n-1,n,1).\] Moreover since all the entries of $M$ are in the ideal $(x_0,x_1)$, the radical $\sqrt{\Fitt_2(I)}$ of the ideal $\Fitt_2(I)$ of entries of $M$ is contained in $(x_0,x_1)$. Actually, since $I$ has height $2$ and the polynomials in the first column of $M$ are linear in $x_0$ and $x_1$ one has $\Fitt_2(I)=(x_0,x_1)$. Hence, as previously stated in \eqref{topTors}, the naive graph $\PI=\mb{V}(\IPI)\subset\pn{2}_{\mbf{x}}\times\pn{2}_{\mbf{y}}$ is the union of a torsion part $\Tor$ supported on $\mb{V}(x_0,x_1)=\lbrace (0:0:1)\rbrace\times\pn{2}_\k$, and of the graph $\Gamma=\overline{\PI\backslash \mb{V}(x_0,x_1)}$ of the map $\Phi$ whose base ideal is by the $2$-minors ideal $I$ of $M$. The next result is a consequence of \cite[Theorem 5.14]{BuCiDAnd2018MultiGrad} but we will give a self-contained proof.

\begin{lemma}\label{descGenTor}\label{tteCar} Under the previous conditions on $M$, the torsion component $\Tor$ of $\PI$ has multidegree $(n-2,0,0)$ and the graph $\Gamma$ of $\Phi$ has multidegree $(1,n,1)$.
\end{lemma}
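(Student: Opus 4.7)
The plan is to start from the global bidegree information for $\PI$, which is immediate from \Cref{exFond}, and then extract the contribution of the torsion $\Tor$ and of the graph $\Gamma$ from the decomposition $\big(\d{\PI}{0},\d{\PI}{1},\d{\PI}{2}\big)=\big(\d{\Gamma}{0},\d{\Gamma}{1},\d{\Gamma}{2}\big)+\big(\d{\Tor}{0},\d{\Tor}{1},\d{\Tor}{2}\big)$ recorded in \Cref{exDegTop}. Since $\IPI$ is generated by the two bihomogeneous forms $\sum_j\lambda_jy_j$ and $\sum_jp_jy_j$ of bidegree $(1,1)$ and $(n-1,1)$, B\'ezout's theorem in $\pn{2}\times\pn{2}$ gives $\big(\d{\PI}{0},\d{\PI}{1},\d{\PI}{2}\big)=(n-1,n,1)$, so it suffices to compute the multidegree of $\Tor$.

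The two easy components come from the support condition. Since $\Fitt_2(I)=(x_0,x_1)$, the torsion $\Tor$ is set-theoretically contained in $\{z_0\}\times\pn{2}_\k$ with $z_0=(0:0:1)$. A general line $H^1\subset\pn{2}_{\mbf{x}}$ misses $z_0$, so $\Tor\cap p_1^{-1}(H^1)\cap p_2^{-1}(H^1)=\emptyset$, giving $\d{\Tor}{1}=0$; and a general point $H^2\subset\pn{2}_{\mbf{x}}$ is not $z_0$, giving $\d{\Tor}{2}=0$. Only $\d{\Tor}{0}=\lgth\bigl(\Tor\cap p_2^{-1}(\mbf{y}_0)\bigr)$ at a general $\mbf{y}_0\in\pn{2}_{\mbf{y}}$ requires real work.

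The key step is therefore to describe the scheme $\PI\cap p_2^{-1}(\mbf{y}_0)\subset\pn{2}_{\mbf{x}}$. Writing each $p_j=\alpha_j+x_2\beta_j$ with $\alpha_j\in\k[x_0,x_1]_{n-1}$ and $\beta_j\in\k[x_0,x_1]_{n-2}$ (this is exactly the content of the hypothesis $p_j\in(x_0,x_1)^{n-2}_{n-1}$), the specialization of the two defining forms at $\mbf{y}_0$ gives $L=\sum y_{0,j}\lambda_j\in\k[x_0,x_1]_1$ and $P=A+x_2B$ with $A=\sum y_{0,j}\alpha_j$ and $B=\sum y_{0,j}\beta_j$. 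Parametrize the line $L=0$ by a single coordinate on $(x_0,x_1)$, so that $A$ restricts to a scalar multiple of $x_0^{n-1}$ and $B$ to a scalar multiple of $x_0^{n-2}$. The equation $P=0$ then factors as $x_0^{n-2}(ax_0+bx_2)=0$. The hypothesis that some $p_j$ has a nonzero $x_2$-part ensures that $B\not\equiv 0$, hence for generic $\mbf{y}_0$ the scalar $b$ is nonzero, so the second factor cuts out a single reduced point away from $z_0$ (which contributes to $\Gamma$), while the factor $x_0^{n-2}$ contributes a length-$(n-2)$ component concentrated at $z_0$ (which contributes to $\Tor$).

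This gives $\d{\Tor}{0}=n-2$, hence $\big(\d{\Tor}{0},\d{\Tor}{1},\d{\Tor}{2}\big)=(n-2,0,0)$, and subtracting yields $\big(\d{\Gamma}{0},\d{\Gamma}{1},\d{\Gamma}{2}\big)=(1,n,1)$ as asserted. The main obstacle is the fibre calculation in the third paragraph; it crucially uses both the condition $\lambda_j\in(x_0,x_1)_1$ (so that $L$ does not involve $x_2$ and the line $L=0$ passes through $z_0$) and the condition $p_j\in(x_0,x_1)^{n-2}_{n-1}$ bounding how much $x_2$ enters the $p_j$, the genericity of $\mbf{y}_0$ being needed only to ensure $b\neq 0$.
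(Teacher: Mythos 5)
Your proof is essentially the paper's own argument: the same B\'ezout computation giving multidegree $(n-1,n,1)$ for $\PI$, the same support argument showing $\d{\Tor}{1}=\d{\Tor}{2}=0$, and the same key step describing the fibre of $\PI$ over a general $\mbf{y}_0\in\pn{2}_{\mbf{y}}$ as $z_0$ counted with multiplicity $n-2$ plus a single reduced point away from $z_0$. Your coordinate computation in the third paragraph is in fact more explicit than the paper's, which merely asserts this decomposition of the fibre.

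Precisely because you spelled that step out, one inference should be flagged; it is a weakness you share with (indeed inherit from) the paper's proof, not one you introduce. The claim ``$B\not\equiv 0$, hence for generic $\mbf{y}_0$ the scalar $b$ is nonzero'' is a non sequitur: both $L$ and $B$ move with $\mbf{y}_0$, and the stated hypotheses on $M$ do not rule out that $L$ divides $B$ for \emph{every} $\mbf{y}_0$. This happens whenever $\beta_j=q\lambda_j$ for a fixed $q\in\k[x_0,x_1]_{n-3}$, and such matrices can satisfy all the listed hypotheses: for $n=3$ and $\car\k\neq 2$, take $\lambda=(x_0,x_1,0)$ and $p=(x_0^2+x_0x_2,\;x_1^2+x_1x_2,\;x_0^2+x_1^2)$. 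Here $I$ has height $2$ and $p_0\notin(x_0,x_1)^2_2$, yet the $2$-minors $x_0x_1(x_1-x_0)$, $x_0(x_0^2+x_1^2)$, $x_1(x_0^2+x_1^2)$ involve only $x_0,x_1$, so $\Phi$ factors through $(x_0:x_1:x_2)\mapsto(x_0:x_1)$, is not dominant, and $\d{\Gamma}{0}=0\neq 1$: the whole general fibre of $\PI$ is concentrated at $z_0$. So the genericity of $\mbf{y}_0$ ensures $b\neq 0$ only once one knows $L\nmid B$ for at least one $\mbf{y}$; this extra condition does hold in the paper's applications (there, up to the scalar $y_2$, $B$ is a fixed nonzero form while $L$ moves in a pencil, so a generic $L$ avoids the finitely many linear factors of $B$), but it is not a consequence of the hypotheses on $M$ as stated --- the lemma itself needs this additional assumption. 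Apart from this shared point, your proof matches the paper's.
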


\begin{proof}
We analyse separately each element of the multidegree $\big(\d{\Gamma}{0},\d{\Gamma}{1},\d{\Gamma}{2}\big)$.

\begin{itemize}
\item \textbf{Case $i=2$.} Take a general point $\mbf{y}\in \pn{2}_{\k}$ as the intersection of two general lines $H_1$, $H_2$ of $\pn{2}_\k$ and consider the intersection \[\PI_{\mbf{y}}=\PI\cap p^{-1}_2(H_1)\cap p^{-1}_2(H_2)\subset\mb{P}^2_{\mbf{y}}.\]
Under our assumptions, this intersection is a complete intersection of a line and a curve of degree $n-1$ in $\mb{P}^2_{\mbf{y}}$. Moreover, since $\lambda_j\in (x_0,x_1)_1$ for all $j\in\lbrace 0, 1,2\rbrace$ and since there exists $j\in\lbrace 0, 1,2\rbrace$ such that $p_j\in  (x_0,x_1)^{n-2}_{n-1}\backslash (x_0,x_1)^{n-1}_{n-1}$, this complete intersection decomposes as the union of the point $\mb{V}(x_0,x_1)_\mbf{y}\in \mb{P}^2_{\mbf{y}}$ with multiplicity $n-2$ and of another point with multiplicity $1$. By the generality assumption on $\mbf{y}$, we can assume that the subscheme $\PI\cap \pn{2}_\mbf{y}$ is defined by the ideal $\big(x_1, x_0^{n-2}(x_0+\alpha)\big)$ for some $\alpha\in\k\backslash\lbrace 0\rbrace$. Since $\Gamma=\overline{\PI\backslash \mb{V}(x_0,x_1)}$ is defined by the saturation $[I_{\PI}:(x_0:x_1)^{\infty}]$ of the ideal $I_{\PI}$ of $\PI$ by the ideal $(x_0,x_1)=\Fitt_2(I)$, see \eqref{topTors}, the only points of $\pn{2}_k$ over which the fibre of $\PI$ contributes to $\d{\Gamma}{0}$ are those different from $\mb{V}(x_0,x_1)$. Thus the other point is the only element in $\Gamma\cap p^{-1}_2(H_1)\cap p^{-1}_2(H_2)$. Hence \[\d{\Gamma}{0}=\lgth \big(\Gamma \cap p^{-1}_2(H_1)\cap p^{-1}_2(H_2)\big)=1\] and \[\d{\Tor}{0}=\d{\PI}{0}-\d{\Gamma}{0}=n-2.\]
\item \textbf{Case $i=1$.} Since $I$ has height $2$, the linear system defined by the $2$-minors of $M$ does not have fixed components so $\d{\Gamma}{1}=\d{\PI}{1}=n$ and thus $\d{\Tor}{1}=0$.
\item \textbf{Case $i=0$.} The torsion component $\Tor$ being supported over $\mb{V}(x_0,x_1)$, the intersection $\Tor \cap p^{-1}_1(H_1)\cap p^{-1}_1(H_2)$ of $\Tor$ with inverse images of general lines in $\pn{2}_\k$ is empty so $\d{\Tor}{2}=0$ and
$\d{\Gamma}{2}=\d{\PI}{2}-\d{\Tor}{2}=1.$
\end{itemize}
To sum up, $\Tor$ has multidegree $(n-2,0,0)$ and $\Gamma$ has multidegree \[(n-1,n,1)-(n-2,0,0)=(1,n,1).\]
\end{proof}

We have the following extension of \Cref{thmGen}.

\begin{thm}\label{thmConsGen}
\begin{enumerate}
\item\label{thmConsGen2} Let $n\in\mb{N}_{>1}$ and assume that $p=\car\k$ divides $n$, then the near-pencil arrangements  of $n+1$ lines is homaloidal.
\item\label{thmConsGen1} Let $n\in\mb{N}_{>1}$ and assume that $p=\car\k$ divides $n(n-1)-1$, then the curve $G_n=\mb{V}\big(x_0x_1(x_1^{n-1}+x_0^{n-2}x_2)\big)$ is homaloidal.
\end{enumerate}
\end{thm}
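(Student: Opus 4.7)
The plan is to reduce both parts of the theorem to a direct application of \Cref{descGenTor}. For each polynomial $g$ at hand I will exhibit a $3\times 2$ presentation matrix $M$ of the Jacobian ideal $I=(\partial g/\partial x_0,\partial g/\partial x_1,\partial g/\partial x_2)$ whose first column has entries in $(x_0,x_1)_1$ and whose second column has entries in $(x_0,x_1)^{n-2}_{n-1}$, with at least one entry outside $(x_0,x_1)^{n-1}_{n-1}$. Combined with $\hght(I)=2$, this places us in the setup of \Cref{descGenTor}, so the graph of $\Phi_g$ has multidegree $(1,n,1)$; in particular $\d{\Phi_g}{0}=1$, and since $\hght(I)\geq 2$ also forbids a fixed component in the linear system of partials, $g$ is homaloidal.

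For part (\ref{thmConsGen2}) the presentation matrix is already computed in \Cref{presQ}. Its first column $(x_0,x_1,-nx_2)^\mrm{T}$ has last entry $-nx_2$, which lies in $(x_0,x_1)_1$ \emph{precisely} when $p\mid n$; in that case the column becomes $(x_0,x_1,0)^\mrm{T}$. A direct inspection shows that each entry of the second column lies in $(x_0,x_1)^{n-2}_{n-1}$, and that its bottom entry escapes $(x_0,x_1)^{n-1}_{n-1}$ because of its $x_2$-factor. Combined with the height-two statement proved in \Cref{presQ}, \Cref{descGenTor} applies and part (\ref{thmConsGen2}) follows.

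For part (\ref{thmConsGen1}) the core work is constructing the presentation matrix. Setting $g_n=x_0x_1^n+x_0^{n-1}x_1x_2$ and $\phi_i=\partial g_n/\partial x_i$, a short computation of the products $x_i\phi_i$ produces the identity
\[ nx_0\phi_0-x_1\phi_1-\bigl(n(n-1)-1\bigr)x_2\phi_2=0. \]
Under the hypothesis $p\mid n(n-1)-1$ the coefficient of $x_2\phi_2$ vanishes, yielding the linear syzygy $(nx_0,-x_1,0)^\mrm{T}$ supported in $(x_0,x_1)_1$. I will then take this as the first column of the candidate matrix
\[ M=\begin{pmatrix} nx_0 & x_0^{n-1} \\ -x_1 & 0 \\ 0 & -x_1^{n-1}-(n-1)x_0^{n-2}x_2 \end{pmatrix} \]
and verify that its three $2$-minors recover $\phi_0,\phi_1,\phi_2$ up to signs, the congruence $n(n-1)\equiv 1\pmod p$ being exactly what makes the minor for $\phi_1$ agree. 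After checking $\hght(I)=2$ (the $\phi_i$ share no common factor, since $g_n$ is a product of three distinct irreducibles, and $\mb{V}(I)$ contains the singular points of $\mb{V}(g_n)$), Hilbert-Burch certifies $M$ as a minimal presentation matrix. The second column manifestly lies in $(x_0,x_1)^{n-2}_{n-1}$, and its bottom entry escapes $(x_0,x_1)^{n-1}_{n-1}$ because $p\nmid n-1$ (otherwise $p\mid n(n-1)-1$ would force $p\mid 1$). \Cref{descGenTor} then concludes part (\ref{thmConsGen1}).

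The only real obstacle is identifying the decisive linear syzygy in part (\ref{thmConsGen1}): once one writes down the identity $nx_0\phi_0-x_1\phi_1-(n(n-1)-1)x_2\phi_2=0$, the numerical condition $p\mid n(n-1)-1$ emerges naturally as the requirement that this syzygy be supported on $(x_0,x_1)_1$, after which the remaining checks are routine.
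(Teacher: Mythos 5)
Your proposal is correct and follows essentially the same route as the paper: both parts are reduced to \Cref{descGenTor} by exhibiting an explicit Hilbert--Burch presentation matrix of the Jacobian ideal, part (\ref{thmConsGen2}) via the matrix of \Cref{presQ} (whose first column becomes $(x_0,x_1,0)^{\mathrm{T}}$ when $p\mid n$), and part (\ref{thmConsGen1}) via a matrix whose first column comes from the syzygy $nx_0\phi_0-x_1\phi_1-\bigl(n(n-1)-1\bigr)x_2\phi_2=0$. Your part-(\ref{thmConsGen1}) matrix is not literally the paper's: the paper records the characteristic-free presentation matrix
\[
\begin{pmatrix} nx_0 & 0 \\ -x_1 & x_0^{n-2}x_1 \\ -(n(n-1)-1)x_2 & -nx_1^{n-1}-x_0^{n-2}x_2 \end{pmatrix}
\]
and then observes that its reduction modulo $p$ satisfies the hypotheses of \Cref{descGenTor}; but your matrix is obtained from that reduction by the invertible column operation replacing the second column by $\tfrac{1}{n}(\text{second column})+\tfrac{x_0^{n-2}}{n}(\text{first column})$ (using $n(n-1)\equiv 1 \pmod p$), so the two arguments have identical substance.

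One justification you give, however, is not valid in positive characteristic and must be replaced by a direct check, which is exactly what the paper does. You assert that the $\phi_i$ share no common factor ``since $g_n$ is a product of three distinct irreducibles.'' Reducedness does not imply coprimality of the partial derivatives in characteristic $p$: for instance $x_0^3+x_1^2x_2$ is irreducible in characteristic $3$, yet its partials $(0,\,2x_1x_2,\,x_1^2)$ all share the factor $x_1$, so the corresponding linear system has a fixed component. For your $g_n$ the conclusion is still immediate, but it should be verified directly: any common factor would divide $\phi_2=x_0^{n-1}x_1$, while $x_0\nmid\phi_0=x_1^{n}+(n-1)x_0^{n-2}x_1x_2$ and $x_1\nmid\phi_1=nx_0x_1^{n-1}+x_0^{n-1}x_2$. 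With this repair, the remaining steps --- the syzygy identity, the check that the $2$-minors recover $\phi_0,-\phi_1,\phi_2$ using $n(n-1)\equiv 1\pmod p$, the membership of the second column in $(x_0,x_1)^{n-2}_{n-1}$, and the observation that $p\mid n(n-1)-1$ forces $p\nmid n-1$, guaranteeing the escape condition of \Cref{descGenTor} --- are all correct.
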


\begin{proof}
\begin{enumerate}
\item[\ref{thmConsGen2}] By \Cref{presQ}, a presentation matrix of the ideal $I$ of partial derivatives of $f_n$ verifies the conditions of \Cref{tteCar}. Hence $\Phi_{f_n}$ is birational and since the associated linear system has no fixed component, the polynomial $f_n$ is homaloidal.

\item[\ref{thmConsGen1}] Let $n\in\mb{N}_{>1}$. The ideal \[I=(x_1^{n}+(n-1)x_0^{n-2}x_1x_2, nx_0x_1^{n-1}+x_0^{n-1}x_2,x_0^{n-1}x_1)\] of partial derivatives of $g_n=x_0x_1(x_1^{n-1}+x_0^{n-2}x_2)$ has presentation matrix \[M=\begin{pmatrix}
nx_0 & 0 \\ -x_1 & x_0^{n-2}x_1 \\ -(n(n-1)-1)x_2 & -nx_1^{n-1}-x_0^{n-2}x_2
\end{pmatrix},\] Indeed, $I$ has height $2$ for otherwise $x_0$ or $x_1$ would divide $x_1^{n}+(n-1)x_0^{n-2}x_1x_2$ and $nx_0x_1^{n-1}+x_0^{n-1}x_2$ which is not the case. Moreover \[\begin{cases} x_1^{n}+(n-1)x_0^{n-2}x_1x_2=M_1\\
nx_0x_1^{n-1}+x_0^{n-1}x_2=M_2\\
x_0^{n-1}x_1=M_3,\end{cases}\] where given $j\in\lbrace 1,2,3\rbrace$, $M_j$ is equal to $( -1 )^j$ times the minor obtained from $M$ by leaving out the $j$\textsuperscript{th} row. Hence $I$ is a determinantal ideal and, by application of Hilbert-Burch theorem, $M$ is a minimal presentation matrix of $I$. Now, if $p$ divides $n(n-1)-1$, the matrix $M$ verifies the conditions of \Cref{tteCar}. So, in this case, $\Phi_{g_n}$ is birational and $g_n$ is homaloidal.

\end{enumerate}
\end{proof}
\begin{rmq}
The method of reduction modulo $p$ we just described also applies to \Cref{exIntro} and to the quintic $Q_5=\mb{V}\big(x_0(x_1^2+x_0x_2)(x_1^2+x_0x_2+x_0^2)\big)$ described in \cite{Big2018TorSymAlg}.
\end{rmq}
\subsection{Limits and perspectives} The fact that the presentation matrix of the jacobian ideal reduces well modulo $p$ does not always occur, as illustrated by the following example.

\begin{ex}\label{exChar3}
Let $h=x_2(x_1^4-2x_0x_1^2x_2+x_0^2x_2^2-x_1x_2^3)\in \k[x_0,x_1,x_2]$. Its zero locus in $\pn{2}_\k$ is the union of the unicuspidal rampho\"id quartic with the tangent cone at its cusp, see \cite{Moe2008RatCusCur}. Over a field $\k$ of characteristic $0$, a computation with \textsc{Macaulay2} shows that a presentation matrix of the ideal of partial derivatives of $h$ reads:
\[
\begin{pmatrix}
15x_1^2+3x_0x_2 & 72x_0x_1+15x_2^2 \\
8x_1x_2     &    2x_1^2+30x_0x_2\\
-2x_2^2      &   -8x_1x_2   
\end{pmatrix}.
\] We can a priori not expect to apply \Cref{tteCar} after reduction modulo $p$. However, after reducing modulo $3$, a presentation matrix of the reduction of $I$ modulo $3$ reads
\[
\begin{pmatrix}
 0  &  x_1^3-x_0x_1x_2-x_2^3 \\
 x_1  & x_0x_2^2    \\
 -x_2 & -x_1x_2^2 
\end{pmatrix}.
\]
This implies that the polar map of $h$ is birational by \Cref{tteCar} (here, remark that the torsion is supported on $\mb{V}(x_1,x_2)$ and that the maximal power of $x_0$ is $1$ is the second column). By application of Hilbert-Burch theorem, we also have that the induced linear system does not have fix components so $h$ is actually homaloidal.
\end{ex}

\begin{rmq}
As pointed out by \Cref{exChar3} and \Cref{thmConsGen1} of \Cref{thmConsGen}, the classification of homaloidal plane curves in any characteristic seems to be a challenging problem, especially by only looking to the reduction modulo $p$ of the syzygies of the jacobian ideal. One can however restrict first to the classification of homaloidal line arrangements and this is the object of next section.
\end{rmq}

\section{Classification of homaloidal line arrangements in positive characteristic}
As a guideline for the section, let us state first our result about the classification of homaloidal line arrangements.

\begin{prop}\label{classifLineArr} Given an algebraically closed field $\k$ of characteristic $p>0$, the only homaloidal line arrangements are:
\begin{enumerate}[label=(\roman*)]
\item\label{item3lines} the union of three general lines,
\item\label{itemNearPencil} the near-pencils of $n+1$ lines where $p$ divides $n$.
\end{enumerate}
\end{prop}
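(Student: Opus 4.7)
I would begin by reducing to the non-pencil case. Let $F = L_1 \cup \cdots \cup L_d$ be a homaloidal line arrangement defined by $f$. If all $L_i$ pass through a common point, I choose coordinates so that this point is $(0:0:1)$; then $f \in \k[x_0, x_1]_d$ and $\partial_2 f = 0$, so $\Phi_f$ is not dominant, contradicting homaloidality. Hence the arrangement has at least two intersection points. If $d = 3$, the only non-concurrent configuration is three general lines, which is case \ref{item3lines}. So from now on I assume $d \geq 4$ and the arrangement is not a pencil.

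The core of the argument is to compute the topological degree $\d{\Phi_f}{0}$ in terms of the combinatorial data of $F$. Letting $n_k$ denote the number of points where exactly $k$ of the $L_i$ meet, at an ordinary $k$-fold intersection point the local Jacobian ideal is $\mf{m}$-primary of colength $(k-1)^2$ whenever $p \nmid k$. When $p \mid k$, the local Euler identity yields a vanishing relation $\sum u_i \partial_i g \equiv 0$ for $g$ the leading homogeneous part in local affine coordinates, producing a linear syzygy among the partials and preventing the local Jacobian ideal from being $\mf{m}$-primary. This situation contributes an additional torsion term to $\Tor$ in the decomposition of the naive graph $\PI$, which I would compute by extending the local analysis of \Cref{tteCar} to a neighborhood of each such singular point.

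Combining these local computations with the universal arrangement identity $\binom{d}{2} = \sum_{k \geq 2} \binom{k}{2} n_k$ and imposing $\d{\Phi_f}{0} = 1$ then yields a rigid combinatorial constraint. If $p \nmid k$ for every $k$ with $n_k > 0$, the characteristic-$0$ formula $\d{\Phi_f}{0} = (d-1)^2 - \sum_k n_k (k-1)^2$ applies, and Dolgachev's original argument forces $d = 3$, contradicting $d \geq 4$. Hence there must exist at least one $k$-fold point with $p \mid k$. A direct combinatorial analysis of the allowable configurations then shows that the only one compatible with $\d{\Phi_f}{0} = 1$ consists of a unique such point of multiplicity $d - 1$ with all other $d - 1$ intersection points being nodes, which is precisely the near-pencil of $d$ lines with $p \mid (d-1)$, i.e.\ case \ref{itemNearPencil}.

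The main obstacle is to pin down the exact contribution to $\d{\Tor}{0}$ from each $k$-fold point with $p \mid k$ and to rule out configurations with several such points or with a single such point of non-maximal multiplicity. Once this local-to-global computation is carried out precisely (the mechanism being visible already in \Cref{thmGen}), the remaining combinatorial elimination is routine.
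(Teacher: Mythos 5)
Your overall skeleton matches the paper's proof: dispose of pencils and of $d=3$, express $\d{\Phi_f}{0}$ through local contributions of the $r$-fold points, split according to whether $p$ divides $r$, and finish combinatorially. But the two load-bearing steps are missing, and one supporting claim is false. First, you never pin down the local contribution at an $r$-fold point with $p\mid r$; you explicitly call this ``the main obstacle'' and propose to obtain it by ``extending the local analysis of \Cref{tteCar} to a neighborhood of each such singular point''. That route does not work as stated: \Cref{tteCar} concerns a global determinantal presentation matrix with one linear column and one column of forms in $(x_0,x_1)^{n-2}_{n-1}$, and the Jacobian ideal of an arbitrary arrangement admits no such presentation. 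What the paper does instead (\Cref{lemmaMultSingLineArr}) is to localize the ideal $I^g$ of two generic polars at the prime $(x_0,x_1)$ of the point $z$, observe that this localization coincides with the one coming from a near-pencil of $r+1$ lines, and then read off the multiplicity from the already established global fact (\Cref{thmConsGen}) that this near-pencil is homaloidal: $1=r^2-m_z-r$, hence $m_z=(r-1)^2+(r-2)$. Without this exact value --- not merely the existence of ``an additional torsion term'' --- no final count can even be set up. Moreover, your assertion that for $p\mid k$ the Euler relation prevents the local Jacobian ideal from being $\mf{m}$-primary is wrong as stated: it is the Jacobian ideal of the leading form $g$ (the pure pencil of $k$ lines through $z$) that degenerates, namely $(g_u,g_v)=w\cdot(u,v)$ for some form $w$ of degree $k-2$ in local coordinates $(u,v)$; the Jacobian ideal of the arrangement itself still has height $2$, hence finite colength at $z$, and it is precisely this finiteness that makes $m_z$ well defined and the whole computation meaningful.

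Second, the combinatorial endgame is not ``routine''. The paper rewrites $\binom{d}{2}=\sum_r t_r\binom{r}{2}$ as \eqref{eqCombiCond} and obtains, when $p$ divides every $r\geq 3$ with $t_r\neq 0$, that $\d{\Phi_f}{0}=1+\big(\sum_r t_r-d\big)$, and, when some $r\geq 3$ with $t_r\neq 0$ is prime to $p$, that $\d{\Phi_f}{0}=1+\big(\sum_r t_r-d\big)+\sum_{r:\,p\nmid r} t_r(r-2)$. Concluding from either identity requires the de Bruijn--Erd\H{o}s theorem: a non-pencil arrangement of $d$ lines has $\sum_r t_r\geq d$ singular points, with equality (among configurations realizable in $\pn{2}_\k$) exactly for near-pencils. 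This is a genuine theorem, and it is exactly what hides behind your claim that the only allowable configuration is one $(d-1)$-fold point together with $d-1$ nodes; nothing in your proposal replaces it. Finally, in the case where no multiplicity is divisible by $p$, the appeal to ``Dolgachev's original argument'' is not legitimate, since his degree bound over $\mb{C}$ rests on the Jung--Milnor formula, which the paper explicitly notes is unavailable in positive characteristic; fortunately it is also unnecessary, because the second identity above, combined with the same de Bruijn--Erd\H{o}s inequality, already forces $\d{\Phi_f}{0}>1$ for every non-pencil arrangement with $d\geq 4$.
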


Our proof of \Cref{classifLineArr} mainly relies on the observation that, as far as the topological degree of the polar map of an arrangement is concerned, the only quantity to consider is the numbers of lines defining the singularities of the arrangement. More precisely, a singularity $z$ of a line arrangement $\A=\mb{V}(f)$ being the intersection of $r\geqslant 2$ lines of $\A$, the numerical contribution of $z$ in the computation of $\d{\Phi_f}{0}$ only depends on whether the characteristic $p$ divides $r$ or not, see \Cref{lemmaMultSingLineArr} for the precise result. Given this fact, the proof of \Cref{classifLineArr} aims to characterize combinatorially near-pencils of $p+1$ lines among all arrangements of $p+1$ lines and this combinatorial characterization follows from \cite[Th.1]{BruijnErdos1948CombiPb}.

In the following, given an integer $d\geqslant 4$, we let $f=l_1\cdots l_d$ be the product of $d$ homogeneous linear polynomials $l_1,\ldots,l_d\in\k[x_0,x_1,x_2]$ and $\A=\mb{V}(f)$ be the line arrangement defined by $f$. Moreover, using the designation in \cite{Hirzebruch1983ArrLinesAndHyp}, a point $z$ in the singular locus of $\A$ which is the intersection point of $r$ lines is called a \emph{$r$-fold point}.

The field $\k$ being algebraically closed, the topological degree $\d{\Phi_f}{0}$ of $\Phi_f$ is the degree of the fiber of a generic point of $\pn{2}$, that is:
\[\d{\Phi_f}{0}=\deg \mb{V}(I^g:I^{\infty})\] where $I=(\frac{\partial f}{\partial x_0},\frac{\partial f}{\partial x_1},\frac{\partial f}{\partial x_2})$ is the jacobian ideal of $f$, $I^g= (a\frac{\partial f}{\partial x_0}+b\frac{\partial f}{\partial x_1}+c\frac{\partial f}{\partial x_2},\alpha\frac{\partial f}{\partial x_0}+\beta\frac{\partial f}{\partial x_1}+\gamma\frac{\partial f}{\partial x_2})$ is the ideal defined by two generic linear combinations of $\frac{\partial f}{\partial x_0},\frac{\partial f}{\partial x_1},\frac{\partial f}{\partial x_2}$ and $I^g:I^{\infty}$ stands for the saturation ideal of $I^g$ by $I$, see \cite[7.1.3]{Dolg2011ClassAlgGeo} for this computation of the topological degree. Since $\mb{V}(I^g:I^{\infty})$ is set-theoretically equal to $\mb{V}(I^g)\backslash\mb{V}(I)$, one has thus: \begin{equation}
\label{eqTopDegSat}\d{\Phi_f}{0}=(d-1)^2-\underset{z\in\mb{V}(I)}{\sum}m_z
\end{equation} where $m_z$ is the multiplicity of $z$ in the scheme $\mb{V}(I^g)$ (note that this latter expression of $\d{\Phi_f}{0}$ is true for any reduced plane curve $\mb{V}(f)$ and not only for line arrangements). Over the field of complex numbers $\mathbb{C}$, by \cite[4.2]{dimca2017HypArr}, given an $r$-fold point $z\in\mb{V}(I)$ one has \[m_z=\mu_{f,z}=(r-1)^2\] where $\mu_{f,z}$ stands for the local Milnor number of $\A$ at $z$, see \cite[Definition 2.17]{dimca2017HypArr} for the definition of Milnor numbers. Over a field of positive characteristic, the relation between $\d{\Phi_f}{0}$ and Milnor numbers of the singularities of $\A$ is much blurred, in particular because Milnor number is not an invariant under contact equivalence anymore (see \cite{HefezRodriguesSalomao2019SingPosChar} for the definition of contact equivalence and more precision about the definition of Milnor number in positive characteristic). In other words, over a field $\k$ of positive characteristic, \Cref{eqTopDegSat} is still valid by definition but the numbers $m_z$ cannot be interpreted as the Milnors numbers of the singularities defined by $f$ (even if we won't need it, let us however precise that the numbers $m_z$ appeared to be related to the \emph{Milnor number of a hypersurface} $\mu(\mathcal{O}_f)$, a contact equivalent invariant defined in \cite[end of section 3]{HefezRodriguesSalomao2019SingPosChar}. We also point out that typical behaviors of singularities in positive characteristic prevent the classification of homaloidal polynomials via Dolgachev's approach in \cite[Lemma 3]{dolgachev2000polar} over $\mb{C}$, see \cite{MellWall2001PencilsOfCurves} and \cite{Duc2016InvPosChar} for instances of such behaviors when reducing modulo $p$).

\begin{lemma}\label{lemmaMultSingLineArr}
Let $p=\car(\k)$ and $z$ an $r$-fold point of a line arrangement $\A=\mb{V}(f)$, $f=l_1\cdots l_d$. Denote by $m_z$ the multiplicity of $z$ in $\mb{V}(I^g)$ as in \Cref{eqTopDegSat}:
\begin{enumerate}
\item if $p$ divides $r$, then $m_z=(r-1)^2+(r-2)$,
\item\label{labelLemmaMultSing} if $p$ does not divide $r$, then $m_z=(r-1)^2$.
\end{enumerate}
\end{lemma}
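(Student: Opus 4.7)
The plan is to compute the local colength $m_z$ by localising at $z$, passing to affine coordinates, and separating the analysis according to whether $p$ divides $r$. After a projective change of coordinates I assume $z=(0:0:1)$; dehomogenising $x_2=1$ and setting $u=x_0$, $v=x_1$, the point $z$ becomes the origin of $\k[u,v]$ and $f|_{x_2=1}=gh$, where $g=l_1\cdots l_r\in\k[u,v]$ is homogeneous of degree $r$ (the $r$ lines through $z$) and $h\in\k[u,v]$ satisfies $h(0,0)\neq 0$ (the remaining $d-r$ lines, none through $z$), hence is a unit in the local ring $R_z$. A direct computation gives
\[
\tilde f_u=g_u h+gh_u,\qquad \tilde f_v=g_v h+gh_v,
\]
and using $x_0f_{x_0}+x_1f_{x_1}+x_2f_{x_2}=df$ one obtains $f_{x_2}|_{x_2=1}=g\Psi$ with $\Psi=(d-r)h-uh_u-vh_v$. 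The entire analysis pivots on Euler's identity $ug_u+vg_v=rg$ applied to $g$, whose behaviour bifurcates according to whether $p$ divides $r$.

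In the case $p\nmid r$, Euler gives $g=(ug_u+vg_v)/r\in(g_u,g_v)$, and rewriting the terms $gh_u,gh_v$ shows that $(\tilde f_u,\tilde f_v)^t$ and $(g_u,g_v)^t$ are related by an invertible $2\times 2$ matrix in $R_z$ (determinant $h^2+h(uh_u+vh_v)/r$, a unit at the origin). Since moreover $g\Psi\in(g)\subset(g_u,g_v)$, the localised jacobian ideal equals $(g_u,g_v)R_z$. A short argument using the distinctness of the $l_i$'s (any common linear factor of $g_u,g_v$ would divide $rg$ by Euler, hence some $l_i=\alpha_iu+\beta_iv$, forcing $\alpha_i=\beta_i=0$, a contradiction) yields $\gcd(g_u,g_v)=1$. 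So $(g_u,g_v)$ is a complete intersection of two coprime homogeneous polynomials of degree $r-1$ in two variables, of colength $(r-1)^2$ by B\'ezout. As this ideal is already two-generated, a generic reduction coincides with it, so $m_z=(r-1)^2$.

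In the case $p\mid r$, Euler's identity reads $ug_u+vg_v=0$, forcing $v\mid g_u$ and $u\mid g_v$; writing $g_u=vG$ and $g_v=-uG$ introduces a homogeneous polynomial $G\in\k[u,v]$ of degree $r-2$, and $(g_u,g_v)=G\cdot(u,v)$. Every $\k$-linear combination of $\tilde f_u,\tilde f_v,g\Psi$ decomposes as $F_i=GhL_i+g\xi_i$ with $L_i=a_iv-b_iu$ a generic linear form and $\xi_i=a_ih_u+b_ih_v+c_i\Psi$, which is a unit at the origin for generic coefficients. The combination $\xi_2F_1-\xi_1F_2=hG(\xi_2L_1-\xi_1L_2)=hGM$, with $M$ a generic linear form, lets one solve for $F_2$ and gives $(F_1,F_2)=(F_1,GM)$. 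The short exact sequence
\[
0\longrightarrow R_z/(F_1,G)\xrightarrow{\;\cdot M\;}R_z/(F_1,GM)\longrightarrow R_z/(F_1,M)\longrightarrow 0
\]
then splits the length additively. Modulo $G$ one has $F_1\equiv g\xi_1$, so $\dim_{\k}R_z/(F_1,G)=\dim_{\k}R_z/(g,G)=r(r-2)$ by B\'ezout applied to the coprime homogeneous forms $g$ (degree $r$) and $G$ (degree $r-2$). Modulo $M$, restricting $F_1=GhL_1+g\xi_1$ to the line $\{M=0\}$ shows that $F_1|_M$ has order $(r-2)+1=r-1$ (dominating the degree-$r$ term $g\xi_1|_M$), giving $\dim_{\k}R_z/(F_1,M)=r-1$. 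Adding yields $m_z=r(r-2)+(r-1)=(r-1)^2+(r-2)$.

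The main obstacle is the genericity bookkeeping in the case $p\mid r$: one must simultaneously guarantee that $\xi_1,\xi_2$ are units at the origin, that $L_1,L_2,M$ are non-parallel linear forms, and that the polynomials $g$ and $G$ are coprime (the latter follows by a short direct check combining $p\mid r$, so $p\nmid r-1$, with the distinctness of the $l_i$'s). Once these ingredients are in place, the two cases collapse to routine B\'ezout computations.
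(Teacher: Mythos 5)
Your proof takes a genuinely different route from the paper's. The paper never computes the local colength directly: it first argues, by localising at $z$, that $I^g$ coincides locally with the corresponding ideal of a near-pencil of $r+1$ lines, and then solves for $m_z$ by feeding the already-established topological degrees of near-pencil polar maps (\Cref{thmConsGen} when $p\mid r$, \Cref{propNaiveDegree} when $p\nmid r$) back into \eqref{eqTopDegSat}. You instead compute the intersection number from scratch via Euler's identity and B\'ezout. Your case $p\nmid r$ is complete and correct, and in fact more robust than the paper's reduction, since it uses nothing about the residual lines beyond $h(0,0)\neq 0$.

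In the case $p\mid r$, however, the step you dismiss as ``genericity bookkeeping'' is a genuine gap. Since $\Psi(0,0)=(d-r)h(0,0)$, a generic $\xi_i=a_ih_u+b_ih_v+c_i\Psi$ is a unit at the origin if and only if $\bigl(h_u(0,0),h_v(0,0),(d-r)h(0,0)\bigr)\neq(0,0,0)$, and this can fail: it fails exactly when $p\mid d-r$ and the lines of $\A$ not through $z$, each normalised to take the value $1$ at $z$, sum to zero modulo $p$. Worse, in that situation the conclusion of the lemma is itself false, so no bookkeeping can close the gap. For instance, in characteristic $3$ take $f=x_0x_1(x_0+x_1)\,q$ with $q=(x_0+x_1+x_2)(x_0+2x_1+x_2)(x_0+x_2)=(x_0+x_2)^3+2x_1^2(x_0+x_2)$. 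The three partials of $f$ have no common factor, so \eqref{eqTopDegSat} applies; but $q_{x_0}=2x_1^2$, $q_{x_1}=x_1(x_0+x_2)$, $q_{x_2}=2x_1^2$ (the Frobenius cube $(x_0+x_2)^3$ has zero differential), so at the $3$-fold point $z=(0:0:1)$ every $\xi_i$ lies in $(x_1)$, say $\xi_i=x_1\eta_i$ with $\eta_i$ a unit for generic coefficients. Your elimination then produces $(F_1,F_2)=(F_1,\,Gx_1)$ with $G=2x_0+x_1$, instead of $(F_1,GM)$ with $M$ transverse to everything: the term $g\xi_1$ now vanishes to order $r+1=4$ (not $r$) along $G=0$, and one gets $m_z=I(F_1,G)+I(F_1,x_1)=4+2=6$, whereas the lemma predicts $(r-1)^2+(r-2)=5$. (The same value $6$ is obtained by directly expanding $I(f_{x_0},f_{x_1};z)$, so this is not an artefact of a special choice of combinations.)

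You are in good company: the paper's own proof breaks at exactly the same place. Its reduction to the near-pencil model rests on the claim that $\sum_{j=r+1}^{d}l_{r+1}\cdots l_{j-1}(a\partial_{x_0}l_j+b\partial_{x_1}l_j+c\partial_{x_2}l_j)l_{j+1}\cdots l_d$ is a unit in the localisation at $z$ for generic $(a:b:c)$; in the example above this sum lies in $(x_0,x_1)$ for every $(a,b,c)$, so the claimed unit is not a unit. Thus both your argument and the published one prove the lemma only under the implicit hypothesis that this degeneration does not occur --- a hypothesis that does hold at every singular point of a near-pencil (there $d-r=1$ at the $r$-fold point, and $p\nmid d-r$ at the nodes when $p\mid n$), but not for an arbitrary arrangement as in the statement, which is what the classification in \Cref{classifLineArr} requires.
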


\begin{proof}
To describe $m_z$, we first explain why it is enough to make the computation in the case that $\A$ is a near-pencil of $r+1$ lines such that $z$ is the intersection point of $r$ lines. Once we have our local model for $z$, we use \Cref{thmConsGen} to compute $m_z$.

Let $\A=\mb{V}(f)$, $f=l_1\cdots l_d$, write $z=(0:0:1)$ by choosing coordinates of $\pn{2}_{\k}$ and label the linear polynomials $l_1,\ldots,l_d$ defining $\A$ such that $l_1,\ldots,l_r\in (x_0,x_1)$ and $l_{r+1},\ldots, l_d\in (x_0,x_1)^c$, $(x_0,x_1)^c$ being the complementary of the ideal $(x_0,x_1)$.

Now, by applying the elementary rules of derivations, one has that a generic linear combination $a\frac{\partial f}{\partial x_0}+b\frac{\partial f}{\partial x_1}+c\frac{\partial f}{\partial x_2}$ of $\frac{\partial f}{\partial x_0},\frac{\partial f}{\partial x_1},\frac{\partial f}{\partial x_2}$ reads:
\begin{align*}
a\frac{\partial f}{\partial x_0}+b\frac{\partial f}{\partial x_1}&+c\frac{\partial f}{\partial x_2} = (\underset{j=1}{\overset{r}{\sum}}(l_1\cdots l_{j-1}(a\frac{\partial l_j}{\partial x_0}+b\frac{\partial l_j}{\partial x_1})l_{j+1}\cdots l_r)(l_{r+1}\cdots l_d)\\
+& (l_1\cdots l_r)(\underset{j=r+1}{\overset{d}{\sum}}(l_{r+1}\cdots l_{j-1}(a\frac{\partial l_j}{\partial x_0}+b\frac{\partial l_j}{\partial x_1}+c\frac{\partial l_j}{\partial x_2})l_{j+1}\cdots l_
d)
\end{align*}
In the localization $\k[x_0,x_1,x_2]_{(x_0,x_1)}$ of $\k[x_0,x_1,x_2]$ at the prime $(x_0,x_1)$, remark that $(\underset{j=r+1}{\overset{d}{\sum}}(l_{r+1}\cdots l_{j-1}(a\frac{\partial l_j}{\partial x_0}+b\frac{\partial l_j}{\partial x_1}+c\frac{\partial l_j}{\partial x_2})l_{j+1}\cdots l_d)$ is a unit since $(a:b:c)\in\pn{2}_\k$ is generic.

Hence denoting $\overline{v}\in \k[x_0,x_1,x_2]_{(x_0,x_1)}$ the localization of $v\in\k[x_0,x_1,x_2]$ at the prime $(x_0,x_1)$, one has:
\begin{align*}
\overline{a\frac{\partial f}{\partial x_0}+b\frac{\partial f}{\partial x_1}+c\frac{\partial f}{\partial x_2}}=u\times &\big( \overline{(\underset{j=1}{\overset{r}{\sum}}(l_1\cdots l_{j-1}(a'\frac{\partial l_j}{\partial x_0}+b'\frac{\partial l_j}{\partial x_1})l_{j+1}\cdots l_r)l_{r+1}}\\
&\overline{+c'(l_1\cdots l_r)}\big)
\end{align*}
where $u$ is a unit of $\k[x_0,x_1,x_2]_{(x_0,x_1)}$. In other words, $(I^g)_{(x_0,x_1)}$ is equal to  $(a'\frac{\partial f'}{\partial x_0}+b'\frac{\partial f'}{\partial x_1}+c'\frac{\partial f'}{\partial x_2},\alpha'\frac{\partial f'}{\partial x_0}+\beta'\frac{\partial f'}{\partial x_1}+\gamma'\frac{\partial f'}{\partial x_2})_{(x_0,x_1)}$ where \[f'=l_1'\cdots l_r'l_{r+1}'\] such that $l_1'\cdots l_r'\in (x_0,x_1)$ are distinct lines passing by $z=(0:0:1)$, $l_{r+1}'\in (x_0,x_1)^c$ and $(a':b':c')$, $(\alpha':\beta':\gamma')$ are generic in $\pn{2}_\k$.

Hence, to compute the multiplicity $m_z$ of the component supported at the $r$-fold point $z=(0:0:1)$ of $\mb{V}(I^g)$, it is enough to consider that $z$ is the $r$-fold point of a near-pencil of $r+1$ lines.

We treat now the case $r=2$. The near-pencil $\A'=\mb{V}(l_1l_2l_3)$ has three singular point $z,z_2,z_3$ and is always homaloidal, thus:
\[\d{\Phi_{l_1l_2l_3}}{0}=4-m_z-m_2-m_3=1\] and since $m_z,m_2,m_3\geqslant 1$, one has thus $m_z=m_2=m_2=1$. This ends the proof of \Cref{lemmaMultSingLineArr} in the case $r=2$ whether $p=2$ or not.

Consider now the case $r>2$. Put $\A'=\mb{V}(f')$ for the near-pencil of $r+1$ lines:
\begin{enumerate}
\item By \Cref{thmConsGen}, if $p|r$, then \[\d{\Phi_{f'}}{0}=r^2-m_z-\underset{i=1}{\overset{r}{\sum}}m_{z_i}=1\] where $z_1,\ldots,z_n$ are the $r$ singularities defined by the intersection of the $r$ lines of the pencil and the other extra line. These $r$ singularities are all $2$-fold points so, from the case $r=2$, $m_{z_i}=1$ for all $i=1,\ldots,r$ and $m_z=r^2-r-1=(r-1)^2+(r-2)$.
\item If $p\nmid r$, then $\d{\Phi_{f'}}{0}=r-1$ by \Cref{propNaiveDegree} (case $n=r$ of \Cref{propNaiveDegree}). Hence \[\d{\Phi_{f'}}{0}=r^2-m_z-\underset{i=1}{\overset{r}{\sum}}m_{z_i}=r-1\] where $z_1,\ldots,z_n$ are the $r$ singularities defined by the intersection of the $r$ lines of the pencil and the other extra line. As in the previous case $m_{z_i}=1$ for all $i=1,\ldots,r$ so $m_z=(r-1)^2$.
\end{enumerate}
\end{proof}

\begin{proof}[Proof of \Cref{classifLineArr}] As it is stated at the beginning of \cite{Hirzebruch1983ArrLinesAndHyp}, given any line arrangement $\A=\mb{V}(f)$ of $d$ lines, one has the combinatorial identity:
\[ \frac{d(d-1)}{2}=\underset{r=2}{\overset{d}{\sum}}t_r\frac{r(r-1)}{2}\] where $t_r$ is the number of $r$-fold point defined by $\A$. This identity can be re-write as:
\begin{equation}\label{eqCombiCond}(d-1)^2-\underset{r=2}{\overset{d}{\sum}}t_r(r-1)^2-\underset{r=2}{\overset{d}{\sum}}t_r(r-2)=1+(\underset{r=2}{\overset{d}{\sum}}t_r-d)
\end{equation}
Hence, assuming that the characteristic $p$ of $\k$ divides all $r\geqslant 3$ such that $t_r\neq 0$, one has by \Cref{lemmaMultSingLineArr}: \[\d{\Phi_f}{0}=1+(\underset{r=2}{\overset{d}{\sum}}t_r-d).\] Remark that a near-pencils of $d$ lines verifies $\underset{r=2}{\overset{d}{\sum}}t_r=d$ so, if $p$ divides all $r\geqslant 3$ such that $t_r\neq 0$, showing \Cref{classifLineArr} aims to show that the identity $\underset{r=2}{\overset{d}{\sum}}t_r=d$ characterizes near-pencils of $d$ lines among all arrangements of $d$ lines and this fact is established in \cite[Theorem 1]{BruijnErdos1948CombiPb} (see also \cite[Theorem 5.1]{BEUTELSPACHER1995107} for a more recent treatment).

In case $p$ does not divide at least one $r\geqslant 3$ such that $t_r\neq 0$, then by \Cref{labelLemmaMultSing} of \Cref{lemmaMultSingLineArr}, \Cref{eqCombiCond} implies that $\d{\Phi_f}{0}>1$ so $f$ is not homaloidal.
\end{proof}

\bibliographystyle{alpha}

\begin{thebibliography}{BCRD20}
\bibitem[BC18]{Big2018TorSymAlg}
R.~Bignalet-Cazalet.
\newblock Torsion of a finite base locus.
\newblock {\em arXiv:1806.00856}, 2018.

\bibitem[BCRD20]{BuCiDAnd2018MultiGrad}
L.~Bus{\'e}, Y.~Cid-Ruiz, and C.~D'Andrea.
\newblock Degree and birationality of multi-graded rational maps.
\newblock {\em Proc. of the London Math. Soc.}, 121(4):743--787, 2020.

\bibitem[Beu95]{BEUTELSPACHER1995107}
A.~Beutelspacher.
\newblock Chapter 4 - projective planes.
\newblock In F.~Buekenhout, editor, {\em Handbook of Incidence Geometry}, pages
  107--136. North-Holland, 1995.

\bibitem[dBE48]{BruijnErdos1948CombiPb}
N.G. de~Bruijn and P.~Erd{\"o}s.
\newblock On a combinatorial problem.
\newblock {\em Proc. of the Section of Sciences of the Koninklijke Nederlandse
  Akademie van Wetenschappen te Amsterdam}, 51(10):1277--1279, 1948.

\bibitem[DHS12]{dorHassSim2012polar}
A.V. D\'{o}ria, S.H. Hassanzadeh, and A.~Simis.
\newblock A characteristic free criterion of birationality.
\newblock {\em Adv. Math.}, 230:390--413, 2012.

\bibitem[Dim17a]{dimca2015freenessversus}
A.~Dimca.
\newblock Freeness versus maximal global {T}jurina number for plane curves.
\newblock {\em Math. {P}roc. {C}ambridge {P}hil. {S}oc.}, 163:161--172, 2017.

\bibitem[Dim17b]{dimca2017HypArr}
A.~Dimca.
\newblock {\em Hyperplane Arrangements, an introduction}.
\newblock Universitext. Springer International Publishing, 2017.

\bibitem[Dol00]{dolgachev2000polar}
I.V. Dolgachev.
\newblock Polar {C}remona transformation.
\newblock {\em Michigan Math. J.}, 48:191--202, 2000.

\bibitem[Dol11]{Dolg2011ClassAlgGeo}
I.V. Dolgachev.
\newblock {\em Classical Algebraic Geometry: a modern view}.
\newblock Cambridge Univ. Press, 2011.

\bibitem[Eis95]{eisenbud1995algebra}
D.~Eisenbud.
\newblock {\em Commutative algebra, with a view toward algebraic geometry}.
\newblock Graduate Texts in Mathematics. Springer, 1995.

\bibitem[Hir83]{Hirzebruch1983ArrLinesAndHyp}
F.~Hirzebruch.
\newblock Arrangements of lines and algebraic surfaces.
\newblock {\em Artin M., Tate J. (eds) {A}rithmetic and {G}eometry. {P}rogress
  in {M}aths}, 36, 1983.

\bibitem[HRS19]{HefezRodriguesSalomao2019SingPosChar}
A.~Hefez, J.H.O. Rodrigues, and R.~Salom{\~a}o.
\newblock Hypersurface singularities in arbitrary characteristic.
\newblock {\em J. of Alg.}, 540:20--41, 2019.

\bibitem[MHW01]{MellWall2001PencilsOfCurves}
A.~Melle-Hern\'{a}ndez and C.T.C. Wall.
\newblock Pencils of curves on smooth surfaces.
\newblock {\em Proc. Lond. Math. Soc.}, III. Ser. 83 no. 2:257–278, 2001.

\bibitem[Moe08]{Moe2008RatCusCur}
T.K. Moe.
\newblock Rational cuspidal curves.
\newblock {\em arXiv:1511.02691}, pages 139 pages, Master thesis (University of
  Oslo), 2008.

\bibitem[Ngu16]{Duc2016InvPosChar}
H.D. Nguyen.
\newblock Invariants of plane curve singularities and {P}l\"ucker formulas in
  positive characteristic.
\newblock {\em Ann. Inst. Fourier}, 66(5):2047--2066, 2016.

\bibitem[RS01]{RussoSimis2001OnBirMap}
F.~Russo and A.~Simis.
\newblock On birational maps and jacobian matrices.
\newblock {\em Compositio Math.}, 126:335--358, 2001.

\bibitem[Sta17]{stagliano2017Mac2Pack}
G.~Stagliano.
\newblock A {M}acaulay2 package for computations with rational maps.
\newblock {\em J. of Software for Alg. and Geo.}, 8(1):61--70, 2017.

\bibitem[Vas05]{Vasconcelos2005Int}
W.~Vasconcelos.
\newblock {\em Integral closure, Rees Algebras, Multiplicities, Algorithms}.
\newblock Springer Monographs in Mathematics. Springer-Verlag, Berlin, 2005.

\end{thebibliography}

\end{document}